\newtheorem{theorem}{Theorem}[section]
\newtheorem{proposition}[theorem]{Proposition}
\newtheorem{algorithm}[theorem]{Algorithm}
\theoremstyle{definition}
\newtheorem{remark}[theorem]{Remark}
\newtheorem{example}[theorem]{Example}
\numberwithin{equation}{section}
\newcommand{\closure}[2][3]{%
  {}\mkern#1mu\overline{\mkern-#1mu#2}}
\begin{document}


\baselineskip=17pt


\title[An Extension to the Gusi\'c-Tadi\'c Specialization Criterion]{An Extension to the Gusi\'c-Tadi\'c Specialization Criterion}

\author[T. R. Billingsley]{Tyler Raven Billingsley}
\address{ Rose-Hulman Institute of Technology\\Department of Mathematics \\
5500 Wabash Ave\\
Terre Haute, IN 47803}
\email{billings@rose-hulman.edu}

\date{09/28/2022}

\begin{abstract}
Let $E/\mathbb Q(t)$ be an elliptic curve and let $t_0 \in \mathbb Q$ be a rational number for which the specialization $E_{t_0}$ is an elliptic curve. In 2015, Gusi\'c and Tadi\'c gave an easy-to-check criterion, based only on a Weierstrass equation for $E/\mathbb Q(t)$, that is sufficient to conclude that the specialization map at $t_0$ is injective. The criterion critically requires that $E$ has nontrivial $\mathbb Q(t)$-rational 2-torsion points. In this article, we explain how the criterion can be used in some cases where this requirement is not satisfied and provide some examples.
\end{abstract}

\subjclass[2020]{Primary 11; Secondary 14}

\keywords{Elliptic curves; Elliptic surfaces; Specialization; Sage; 2-descent; Galois cohomology}

\maketitle

\section{Introduction}

Let $C$ be a (complete nonsingular) curve defined over a number field $k$ with function field $k(C)$. Let $E/k(C)$ be an elliptic curve defined by the Weierstrass equation $$y^2 = x^3 +A(t)x+B(t), \qquad A(t), B(t) \in k(C).$$ For any $t_0 \in C(k)$ such that the discriminant $4A(t)^3+27B(t)^2$ of $E$ does not vanish or have a pole at $t_0$, we define the elliptic curve $E_{0}/k$ using the Weierstrass equation $y^2 = x^3 + A(t_0)x+B(t_0)$. The specialization map at $t_0$ is the map 
$$\sigma_{t_0} : E(k(C)) \to E_{t_0}(k).$$
which takes the point $(x(t), y(t)) \in E(k(C))$ to $(x(t_0),y(t_0)) \in E_{t_0}(k)$. It is in fact a group homomorphism; that is, it respects the standard chord-and-tangent group laws on the domain and codomain. It is natural to ask what information can be extracted about the relationship between $E(k(C))$ and $E_{t_0}(k)$ through this homomorphism.

In 1952, N\'eron proved a theorem regarding specialization, of which the following result is a special case.
\begin{theorem} \cite{Neron-Spec} Let $k$ be a number field and let $E/k(t)$ be an elliptic curve. Then for infinitely many $t_0 \in k$, the specialization map
$$\sigma_{t_0} : E(k(t)) \to E_{t_0}(k)$$
is injective.
\end{theorem}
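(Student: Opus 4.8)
The plan is to reduce the claim to a comparison of canonical heights together with a Northcott-type finiteness argument, essentially following Silverman. First I would recall that $E(k(t))$ is finitely generated: this is the Lang--N\'eron theorem, the function-field analogue of the Mordell--Weil theorem, applicable since $k(t)$ is finitely generated over $\mathbb Q$. Fix a decomposition $E(k(t)) \cong \mathbb Z^r \oplus T$ with $T$ finite and a basis $P_1,\dots,P_r$ of the free part. Then injectivity of $\sigma_{t_0}$ follows from two facts: (i) $\sigma_{t_0}$ is injective on $T$, and (ii) $\sigma_{t_0}(P_1),\dots,\sigma_{t_0}(P_r)$ are $\mathbb Z$-linearly independent modulo torsion in $E_{t_0}(k)$. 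Indeed, if $Q = T_0 + \sum_i n_i P_i$ lies in $\ker\sigma_{t_0}$ with $T_0\in T$, then $\sum_i n_i\sigma_{t_0}(P_i) = -\sigma_{t_0}(T_0)$ is torsion, so (ii) forces all $n_i=0$, whence (i) forces $T_0=0$. Fact (i) is elementary: a nonzero difference of torsion points has an affine representative $(x(t),y(t))$ and can specialize to the identity only when $t_0$ is a pole of $x$, and there are finitely many such $t_0$.

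For (ii) I would invoke the height machinery. Assume $E$ is non-isotrivial (in the isotrivial case a finite base change makes $E$ constant, where $\sigma_{t_0}$ is essentially an isomorphism, so the theorem is clear). Let $\hat h$ be the geometric canonical height on $E(\overline k(t))$: a nonnegative quadratic form vanishing exactly on the torsion, with associated pairing $\langle P,Q\rangle = \tfrac12\bigl(\hat h(P+Q)-\hat h(P)-\hat h(Q)\bigr)$; the Gram matrix $M=\bigl(\langle P_i,P_j\rangle\bigr)$ is positive definite. For each $t_0$ of good reduction let $\hat h_{t_0}$ be the N\'eron--Tate canonical height on $E_{t_0}(k)$, with pairing $\langle\cdot,\cdot\rangle_{t_0}$. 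The key input is Silverman's specialization theorem for heights: for every $P\in E(k(t))$,
$$\lim_{h(t_0)\to\infty}\frac{\hat h_{t_0}(\sigma_{t_0}(P))}{h(t_0)} = \hat h(P),$$
where $h$ denotes the standard height on $k$. Applying this to the finitely many points $P_i$ and $P_i+P_j$ — which determine all entries of $M_{t_0}:=\bigl(\langle\sigma_{t_0}(P_i),\sigma_{t_0}(P_j)\rangle_{t_0}\bigr)$ — shows that $\tfrac{1}{h(t_0)}M_{t_0}\to M$ entrywise as $h(t_0)\to\infty$.

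To finish: since $M$ is positive definite, so is $M_{t_0}$ once $h(t_0)$ is large enough, and then for every nonzero $\vec{n}\in\mathbb Z^r$ we have $\hat h_{t_0}\bigl(\sum_i n_i\sigma_{t_0}(P_i)\bigr)=\vec{n}^{\mathrm{T}}M_{t_0}\,\vec{n}>0$, so $\sum_i n_i\sigma_{t_0}(P_i)$ is non-torsion; this is precisely (ii). By Northcott's theorem there are infinitely many $t_0\in k$ with $h(t_0)$ arbitrarily large, and after discarding the finitely many $t_0$ that are bad for reduction or for (i), infinitely many remain at which $\sigma_{t_0}$ is injective.

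The hard part is the height comparison. Establishing it requires first constructing $\hat h$ from the elliptic surface attached to $E/k(t)$ — intersection-theoretically, or as a sum of N\'eron local heights over the places of $k(t)$ — and then showing that under specialization each local term matches the corresponding local height on $E_{t_0}$ up to a controlled error: the archimedean and good-reduction places contribute a main term proportional to $h(t_0)$, while the fibral corrections from the singular fibers remain $o(h(t_0))$. This is Silverman's theorem, and its proof is the technical core. An alternative, nearer to N\'eron's original 1952 argument, bypasses heights: one shows that the locus of $t_0\in k$ at which $\sigma_{t_0}$ fails to be injective is contained in a thin subset of $k$, and then uses Hilbert irreducibility to conclude that a thin set omits infinitely many rational points; that route replaces the height estimates with the theory of thin sets and coverings of the base curve.
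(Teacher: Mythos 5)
The paper does not prove this statement at all: Theorem 1 is quoted from N\'eron's 1952 paper purely as background, so there is no internal proof to compare yours against. On its own terms, your outline is essentially correct and follows the standard modern, height-theoretic route: it is really a sketch of Silverman's refinement (discussed in the paper's next paragraph), which gives the stronger conclusion that the non-injective $t_0$ lie in a set of bounded height, hence form a finite set. The reduction to (i) injectivity on torsion plus (ii) independence of the specialized generators modulo torsion is sound, the torsion step is handled correctly, and the positive-definiteness/Northcott endgame works. You are also right that the entire weight of the argument rests on the height-specialization limit, which you correctly flag as the technical core and take as a black box; the thin-set/Hilbert-irreducibility alternative you mention at the end is closer to N\'eron's actual argument and proves exactly the stated (weaker) assertion without that input.

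One point deserves correction: the dichotomy should not be ``non-isotrivial vs.\ isotrivial'' but ``trivial vs.\ nontrivial $k(t)/k$-trace.'' The geometric height pairing is positive definite on $E(k(t))/\mathrm{tors}$ precisely when $E$ is not isomorphic over $\closure[1]{k}(t)$ to a constant curve; a non-constant quadratic or sextic twist of a constant curve is isotrivial yet has trivial trace, so it is covered by your main height argument. Your proposed shortcut for the isotrivial case (``a finite base change makes $E$ constant, where $\sigma_{t_0}$ is essentially an isomorphism'') is not obviously correct for such twists, since the base change replaces $\mathbb P^1$ by a cover $C'$ of positive genus and $E(k(t))$ then embeds into $E_0(k')\oplus\mathrm{Hom}(J(C'),E_0)$, where specialization is evaluation of a non-constant morphism and injectivity is no longer immediate. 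The clean split is: trivial trace, where your height argument applies verbatim; or $E$ isogenous over $k(t)$ to a constant curve $E_0\times\mathbb P^1$, in which case $E(k(t))$ is commensurable with $E_0(k)$ and specialization genuinely is essentially the identity away from finitely many $t_0$.
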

Thirty years later, Silverman \cite{Silverman-Spec} improved on N\'eron's result by proving that all but finitely many maps $\sigma_{t_0}$ are injective by showing that the set of $t_0 \in k$ for which injectivity fails is a set of bounded height. While Silverman's result is effective, it might not give a practical way to find this finite set, and so the author knows of no nontrivial examples (that is, no examples of non-split elliptic curves) for which these finitely many non-injective specialization maps have been identified. Methods which allow us to work concretely and effectively with specialization maps allow us to inch closer to computing such an example.

In 2021 \cite{Billingsley-algorithm}, the author discussed an algorithm, based on the proof of Theorem 1, which allows one to effectively check when (most) specialization maps are injective on a given subgroup of the Mordell-Weil group. While this algorithm works nicely, one downfall is that one requires explicit generators of a subgroup in order to apply it. One may use the criteria of Gusi\'c and Tadi\'c \cite{GusicTadic} to check if specialization is injective without knowing generators of the Mordell-Weil group. The method, based on 2-descent, only works as long as your elliptic curve has a $k(t)$-rational 2-torsion point, so it leaves the question open for how to work with the situation with unknown generators and no $k(t)$-rational 2-torsion.

The aim of the present article is to generalize Gusi\'c and Tadi\'c's criteria to apply in some cases where there is no $k(t)$-rational 2-torsion. We apply this to an example that the previous criterion could not address, then we combine the methods of \cite{Billingsley-algorithm} and Gusi\'c-Tadi\'c to address another example which neither method could work with originally.

The calculations in this article were greatly aided by Sage \cite{sagemath}.

\section{Motivation and Preliminaries} \label{sec:weakmw}

We begin by recalling some effective specialization results that will be needed in \S \ref{sec:2divcurves}. For details, see \cite{Billingsley-algorithm}. First, we will need the following group-theoretic criterion to show that specialization maps are injective.

\begin{proposition} \label{prop:grouptheory} \cite{Serre-MordellWeil}
Let $n$ be a positive integer and let $\phi: M \to N$ be a homomorphism of abelian groups with the following properties.
	\begin{enumerate}
	\item $M$ is finitely generated.
	\item The induced map $\bar\phi: M/nM \to N/nN$ is injective.
	\item $\phi|_{M[n]}$ gives an isomorphism $M[n] \cong N[n]$.
	\item $\phi|_{M_\text{tors}}$ is injective.
	\end{enumerate}
Then $\phi$ is injective.
\end{proposition}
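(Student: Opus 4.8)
The plan is to prove that the kernel $K := \ker\phi$ is trivial. Since $M$ is finitely generated and $\mathbb{Z}$ is Noetherian, its subgroup $K$ is again a finitely generated abelian group, and the argument consists of two moves: first show, using hypotheses (2) and (3), that $K$ is $n$-divisible in the strong sense $K = nK$; then use finite generation to force $K$ to be finite, and let hypothesis (4) kill the finite group that remains.

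First I would show $K \subseteq nM$: for $x \in K$, the class $\bar x \in M/nM$ has $\bar\phi(\bar x) = \overline{\phi(x)} = 0$ in $N/nN$, so $\bar x = 0$ by the injectivity assumed in (2), i.e.\ $x \in nM$. The crux is to promote this to $K \subseteq nK$. Given $x \in K$, write $x = ny$ with $y \in M$; then $n\phi(y) = \phi(ny) = \phi(x) = 0$, so $\phi(y) \in N[n]$. By the surjectivity half of the isomorphism in (3) there is $z \in M[n]$ with $\phi(z) = \phi(y)$, hence $y - z \in \ker\phi = K$, and since $nz = 0$ we get $x = ny = n(y-z) \in nK$. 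As $x$ was arbitrary, $K = nK$.

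I would then iterate to obtain $K = n^jK$ for all $j \ge 1$. Writing $K \cong \mathbb{Z}^{s} \oplus T$ with $T$ finite (structure theorem for finitely generated abelian groups), the subgroup $n^jK$ has index $n^{js}\cdot[T : n^jT] \ge n^{js}$ in $K$, and this can equal $1$ for all $j$ only if $s = 0$ (here one uses $n \ge 2$; the case $n = 1$ is degenerate, hypotheses (2) and (3) holding automatically, so one reads $n \ge 2$). Hence $K = T$ is finite, so $K \subseteq M_{\mathrm{tors}}$, and $\phi|_{K}$ is simultaneously the zero map and, being a restriction of the injective map $\phi|_{M_{\mathrm{tors}}}$, injective. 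Therefore $K = 0$ and $\phi$ is injective.

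The one step I expect to need real care is the passage from $K \subseteq nM$ to $K \subseteq nK$: this is exactly where hypothesis (3) enters, and the point is that it is the surjectivity of $\phi$ on $n$-torsion --- lifting $\phi(y) \in N[n]$ to some $z \in M[n]$ --- that produces the correction term making $x$ divisible by $n$ \emph{within} $K$. Everything else is routine bookkeeping with finitely generated abelian groups.
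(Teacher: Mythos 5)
Your proof is correct. The paper itself gives no proof of this proposition---it is quoted from Serre's \emph{Lectures on the Mordell--Weil Theorem}---and your argument is exactly the standard one: hypothesis (2) puts $\ker\phi$ inside $nM$, the surjectivity half of (3) upgrades this to $\ker\phi = n\ker\phi$, finite generation then forces $\ker\phi$ to be finite (note that $K=nK$ alone already gives $[K:nK]=1$, so the iteration over $j$ is not needed), and (4) finishes. Your side remark that one must read $n\ge 2$ is also well taken: for $n=1$ conditions (2) and (3) are vacuous and the statement as literally written would be false, which is consistent with the paper's later stipulation ``$n\ge 2$'' when the proposition is applied.
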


For our applications, fix an elliptic curve $E$ over $\mathbb Q(t)$. Set $\phi = \sigma_{t_0}$ to be the specialization homomorphism for a fixed $t_0 \in \mathbb Q$, $M = E(\mathbb Q(t))$, $N = E_{t_0}(\mathbb Q)$ and $n \geq 2$ a positive integer. In this case, conditions 1 and 4 above are always true; indeed, condition 1 is the function field version of the Mordell-Weil Theorem \cite[Chapter III \S 6]{Silverman2}, and condition 4 follows from basic results on formal groups of elliptic curves and their relationship to reduction mod $p$ found in Silverman \cite[Chapter VII \S 3]{Silverman1}. So checking conditions 2 and 3 suffice to show that a given specialization map is injective. We provide two examples below that show that neither of the two conditions imply the other in general, then we provide a third example that shows that condition 2 is not necessary for specialization to be injective.

\begin{example} \label{ex:torsion_needed} (2 holds, but not 3)
Let $E: y^2 = x^3-(t^2+27)x+(10t^2+48t+90)$, $\phi = \sigma_{30}$, $M = E(\mathbb Q(t))$, $N = E_{30}(\mathbb Q)$ and $n = 2$. In \cite{Shioda-Examples}, this elliptic curve is shown to have Mordell-Weil rank 4 and to have no nontrivial torsion points over $\mathbb{Q}(t)$. The Mordell-Weil group is generated by the four points $P_1 = (9, t+24), P_2 = (6,2t+12), P_3 = (1,3t+8)$ and $P_4 = (t+3,4t+6)$, so a complete set of representatives for the nonidentity cosets of $2E(\mathbb Q(t))$ in $E(\mathbb Q(t))$ is 
$$\left\{\sum_{i \in C} P_i \mid C \subset \{1,2,3,4\}, C \neq \emptyset\right\}.$$
One can check (for instance, using the {\upshape \texttt{EllipticCurve}} method {\upshape \texttt{division\_points(2)}} in Sage) that for $t=30$ the specialization of each of these 15 points is not divisible by 2 in $E_{30}$. Thus 
$$\closure[1]{\sigma_{30}}: E(\mathbb Q(t))/2E(\mathbb Q(t)) \to E_{30}(\mathbb Q)/2E_{30}(\mathbb Q)$$
 is injective. However, the Mordell-Weil group of $E_{30}$ is $\mathbb{Z}^3 \times \mathbb{Z}/2\mathbb{Z}$, so $\sigma_{30}$ cannot be injective. In particular, condition 3 of Proposition \ref{prop:grouptheory} does not hold, but condition 2 does hold.
\end{example}

\begin{example} \label{ex:3not2} (3 holds, but not 2)
Let $E: y^2 = x^3-t^2x+t^2, \phi = \sigma_{2}$, $M = E(\mathbb Q(t))$, $N = E_{2}(\mathbb Q)$ and $n = 2$. One can check that $E(\mathbb Q(t)) \cong \mathbb Z^2$ with generators
$$P = (t,t), Q = (0,t).$$
Then, using Sage, one can check that the specialization $E_2$ has $E_2(\mathbb Q) \cong \mathbb Z$. Hence $\closure[1]{\sigma_2}$ is a map from a group of order 4 to a group of order 2, so $\closure[1]{\sigma_2}$ cannot be injective.
\end{example}

\begin{example} \label{ex:linear_algebra}
Let $E$ be the elliptic curve from Example \ref{ex:3not2} and set $t_0 = 27$. On the elliptic curve $E_{27}: y^2 = x^3 - 729x + 729$, notice that
$$[2](-9,81) = (27,27) = P_{27},$$
so condition 2 of Proposition \ref{prop:grouptheory} fails. A check using Sage shows that $E_{27}(\mathbb Q) \cong \mathbb Z^2$ with generators $R_1 = (-9,81)$ and $R_2 = (-27,27)$. Now $P_{27} = 2R_1$ and $Q_{27} = -(2R_1+R_2)$, meaning the matrix of the specialization map $\sigma_{27}$ with respect to the ordered bases $\{P,Q\}$ and $\{R_1,R_2\}$ is
$$\begin{bmatrix}
2 & -2 \\
0 & -1
\end{bmatrix}.$$
The determinant of this matrix is $-2 \neq 0$, so $\sigma_{27}$ is injective.
\end{example}

Proposition \ref{prop:grouptheory} can be applied to yield an algorithm for checking if a specialization map is injective.

\begin{algorithm} \label{algorithm} \cite{Billingsley-algorithm}
Let $E/\mathbb Q(t)$ be an elliptic curve given by a Weierstrass equation and let $M < E(\mathbb Q(t))$ be a subgroup such that the inclusion $M \subset E(\mathbb Q(t))$ satisfies the hypotheses of Proposition \ref{prop:grouptheory} for some $n \geq 2$. Then for all $t_0 \in \mathbb Q$ except those contained in a certain subset of $\mathbb Q$, there is an algorithm which shows that the specialization map $\sigma_{t_0}|_M$ is injective.
\end{algorithm}

The details of this algorithm are not important for our purposes, but it is worth emphasizing that the algorithm requires knowing generators of $M/nM$ for some $n$. For the case $M = E(\mathbb Q(t))$, finding such generators in general is an open question.

Finally, we recall the results of Gusi\'c and Tadi\'c \cite{GusicTadic} on which this article builds. The following first statement is modified from the original source to reflect Stoll's recasting of the criteria to include their close connection to 2-descent \cite{Stoll-Spec}.

\begin{theorem} \cite{GusicTadic} \label{thm:GT_2-descent} Let $k$ be a number field and let $R$ be a UFD containing the ring of integers $\mathcal O_k$ with a finitely generated unit group. Let $E/k(t)$ be an elliptic curve given by a Weierstrass equation
$$E: y^2 = (x-e_1)(x-e_2)(x-e_3)$$
with $e_i \in R[t]$ for each $i$. Let $G$ be the subgroup of $k(t)^*/(k(t)^*)^2 \times k(t)^*/(k(t)^*)^2$ generated by tuples with entries coming from the irreducible factors of
\begin{equation} \tag{A} \label{eqn:disc_factors} e_1-e_2, \qquad e_2-e_3, \qquad e_1-e_3 \end{equation}
in (the UFD) $R[t]$ and the units of $R$. If each nonidentity $(f(k(t)^*)^2,g(k(t)^*)^2) \in G$ has the property that $f(t_0)$ or $g(t_0)$ is not a square in $k$, then the specialization map at $t_0$ is injective.
\end{theorem}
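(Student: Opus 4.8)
The plan is to verify conditions (2) and (3) of Proposition \ref{prop:grouptheory} for the specialization homomorphism $\sigma_{t_0} \colon E(k(t)) \to E_{t_0}(k)$, since conditions (1) and (4) always hold in this setting as noted above. Condition (3) is automatic here: because $E$ has full $k(t)$-rational $2$-torsion (the three points $(e_i,0)$ are defined over $k(t)$), and the hypothesis on $G$ rules out degeneracy of the roots at $t_0$ (if $e_i(t_0)=e_j(t_0)$ for some $i\neq j$, then $e_i-e_j$ would have a factor vanishing at $t_0$, contradicting the square condition applied to a carefully chosen element of $G$), the reduced curve $E_{t_0}$ also has full rational $2$-torsion, and $\sigma_{t_0}$ restricted to $E(k(t))[2]$ is visibly an isomorphism onto $E_{t_0}(k)[2]$. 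So the crux is condition (2): the injectivity of the induced map $\overline{\sigma_{t_0}} \colon E(k(t))/2E(k(t)) \to E_{t_0}(k)/2E_{t_0}(k)$.

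To handle condition (2) I would set up the classical $2$-descent map explicitly. For a curve $y^2=(x-e_1)(x-e_2)(x-e_3)$ over a field $K$ (of characteristic not $2$), there is an injective homomorphism
$$
\delta_K \colon E(K)/2E(K) \hookrightarrow K^*/(K^*)^2 \times K^*/(K^*)^2,
\qquad
(x,y) \mapsto \bigl(x-e_1,\; x-e_2\bigr)
$$
(with the usual modifications at the two-torsion points and the identity), the second coordinate determining the image mod squares at the third root since the product of all three is $y^2$. The key compatibility is that these descent maps commute with specialization: reduction $K=k(t) \to k$ at $t=t_0$ carries $\delta_{k(t)}$ to $\delta_k$ on the nose, because the formulas are given by the same polynomial expressions $x-e_i$ evaluated before versus after substituting $t=t_0$. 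Moreover — and this is the content of Gusi\'c--Tadi\'c's observation — the image of $\delta_{k(t)}$ lands inside the subgroup $G$, since for a point $(x(t),y(t))$ on $E$ the values $x(t)-e_i(t)$ are, up to squares in $k(t)^*$, products of units of $R$ and irreducible factors of the three differences in \eqref{eqn:disc_factors}; this is a valuation-theoretic argument in the UFD $R[t]$, examining the parity of the order of $x-e_i$ along each irreducible $\pi$, and using that a point with a pole contributes trivially mod squares.

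Granting these two facts, condition (2) follows: suppose $Q \in E(k(t))$ has $\sigma_{t_0}(Q) \in 2E_{t_0}(k)$. Then $\delta_k(\sigma_{t_0}(Q))$ is trivial, hence $\overline{\delta_{k(t)}(Q)}$ — which is the image of $\delta_{k(t)}(Q) \in G$ under componentwise $t\mapsto t_0$ reduction into $k^*/(k^*)^2 \times k^*/(k^*)^2$ — is trivial, i.e. both coordinates of $\delta_{k(t)}(Q)$, evaluated at $t_0$, are squares in $k$. By the hypothesis on $G$, the only element of $G$ with this property is the identity, so $\delta_{k(t)}(Q)$ is trivial, and by injectivity of $\delta_{k(t)}$ we get $Q \in 2E(k(t))$. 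Thus $\overline{\sigma_{t_0}}$ is injective, and Proposition \ref{prop:grouptheory} finishes the argument.

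The main obstacle I anticipate is making precise the claim that $\delta_{k(t)}(E(k(t)))$ is contained in $G$, and in particular handling the contribution of units: one must work in $R[t]$ rather than $k[t]$, track that the leading coefficients and unit parts of the relevant polynomials lie in the finitely generated unit group of $R$, and deal with the evaluation $t\mapsto t_0$ being well-defined mod squares (which requires that no relevant factor acquires a zero or pole at $t_0$ — exactly what the square hypothesis on the generators of $G$ guarantees, including for the discriminant). A secondary technical point is the case analysis in the definition of $\delta$ at the three $2$-torsion points, where $x-e_i$ vanishes and one substitutes $(e_i-e_j)(e_i-e_k)$ instead; one checks these values also lie in $G$ directly from \eqref{eqn:disc_factors}. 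None of this is deep, but it is where the hypotheses of the theorem (UFD, finitely generated units, integral models $e_i \in R[t]$) are actually used.
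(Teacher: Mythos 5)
The paper does not prove this theorem: it is imported verbatim (with Stoll's $2$-descent recasting) from the cited Gusi\'c--Tadi\'c paper, so there is no internal proof to compare against. Your sketch is the standard argument underlying that source --- the descent injection $E(K)/2E(K)\hookrightarrow K^*/(K^*)^2\times K^*/(K^*)^2$, the valuation argument in the UFD $R[t]$ placing the image inside $G$, compatibility of $\delta$ with evaluation at $t_0$, and Proposition \ref{prop:grouptheory} to upgrade injectivity mod $2$ to injectivity --- and the technical points you flag (units of $R$, the modified formulas at $2$-torsion, and points whose $x$-coordinate has a zero or pole at $t_0$) are exactly where the real work in the cited proof lies. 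This is essentially the same approach as the source, correctly identified.
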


The second criterion below is modified to reflect its relationship to descent by 2-isogeny.

\begin{theorem} \cite{GusicTadic} \label{thm:GT_2-isogeny} Let $k$ and $R$ be as in Theorem \ref{thm:GT_2-descent}. Let $E/k(t)$ be an elliptic curve given by the Weierstrass equation
$$E: y^2 = x^3+ax^2+bx$$
with $a,b \in R[t]$, and additionally suppose that $a^2-4b$ is not a square in $k$. Let $G_\phi, G_{\hat\phi}$ be the subgroups of $k(t)^*/(k(t)^*)^2$ generated by the irreducible factors of
$$ b \text{ and } a^2-4b$$
in $R[t]$, respectively, and the units of $R$. If each nonidentity $f(k(t)^*)^2 \in G_\phi$ and $g(k(t)^*)^2 \in G_{\hat\phi}$ has the property that $f(t_0)$ and $g(t_0)$ are not squares in $k$, then the specialization map at $t_0$ is injective.
\end{theorem}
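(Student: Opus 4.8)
The strategy is to verify the hypotheses of Proposition 2.2 with $n = 2$, $M = E(\mathbb{Q}(t))$, $N = E_{t_0}(\mathbb{Q})$, $\phi = \sigma_{t_0}$. As noted in the text, conditions (1) and (4) are automatic, and condition (3) — that $\sigma_{t_0}$ is an isomorphism on $2$-torsion — follows from the hypothesis that $a^2 - 4b$ is not a square in $k$: over $\mathbb{Q}(t)$ the only rational $2$-torsion point is $(0,0)$ since $x^2 + ax + b$ is irreducible, and the same non-square condition at $t_0$ (which is implied, since the constant polynomial $a^2-4b$ lies in $G_{\hat\phi}$ up to squares, or can be arranged to) forces $E_{t_0}$ to have the same $2$-torsion structure. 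So the whole content is condition (2): injectivity of $\overline{\sigma_{t_0}} : E(\mathbb{Q}(t))/2E(\mathbb{Q}(t)) \to E_{t_0}(\mathbb{Q})/2E_{t_0}(\mathbb{Q})$.

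To handle condition (2), I would factor the multiplication-by-$2$ map through the $2$-isogeny $\phi : E \to E' = E/\langle(0,0)\rangle$ and its dual $\hat\phi : E' \to E$, so that $[2] = \hat\phi \circ \phi$. The descent-by-$2$-isogeny machinery (as in Silverman, \emph{AEC} X.4) gives injective connecting homomorphisms $E'(\mathbb{Q}(t))/\phi(E(\mathbb{Q}(t))) \hookrightarrow \mathbb{Q}(t)^*/(\mathbb{Q}(t)^*)^2$ and $E(\mathbb{Q}(t))/\hat\phi(E'(\mathbb{Q}(t))) \hookrightarrow \mathbb{Q}(t)^*/(\mathbb{Q}(t)^*)^2$, and these are compatible with specialization: one gets a commutative square relating the $\mathbb{Q}(t)$-descent maps, the $\mathbb{Q}$-descent maps at $t_0$, and the evaluation map $\mathbb{Q}(t)^*/(\mathbb{Q}(t)^*)^2 \to \mathbb{Q}^*/(\mathbb{Q}^*)^2$, $h \mapsto h(t_0)$. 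The key point is that the image of $E(\mathbb{Q}(t))$ under the descent map lands in the subgroup $G_{\hat\phi}$ generated by the irreducible factors of $a^2 - 4b$ and the units of $R$ — this is exactly where the UFD hypothesis on $R$ enters, since the $x$-coordinate of a rational point, written in lowest terms, has numerator and denominator controlled by these factors — and likewise the image of $E'(\mathbb{Q}(t))$ lies in $G_\phi$, generated by the factors of $b$. A standard diagram chase then shows: if $Q \in E(\mathbb{Q}(t))$ specializes into $2E_{t_0}(\mathbb{Q})$, then its images in both $G_{\hat\phi}$ and (after pulling back) $G_\phi$ evaluate to squares at $t_0$; by hypothesis every nonidentity class in $G_\phi$ and $G_{\hat\phi}$ evaluates to a non-square at $t_0$, so these images are trivial, forcing $Q \in 2E(\mathbb{Q}(t))$. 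That gives condition (2).

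The main obstacle — and the step I would spend the most care on — is making the containment of the descent-map images in $G_\phi$ and $G_{\hat\phi}$ precise, and checking that the commutative square genuinely commutes after specialization (i.e.\ that reduction/evaluation commutes with the connecting maps). Concretely one must argue that for $P = (x,y) \in E(\mathbb{Q}(t))$ with $P \notin \{O, (0,0)\}$, the class of $x$ in $\mathbb{Q}(t)^*/(\mathbb{Q}(t)^*)^2$ is a product of irreducible factors of $b$ together with a unit of $R$ — using that $x$ and $x^2 + ax + b$ are, up to squares and units, built from divisors of $b$ because $y^2 = x(x^2+ax+b)$ in the UFD $R[t]$ (after clearing denominators and using that $R[t]$ is a UFD with unit group that of $R$) — and the dual statement with $a^2-4b$; the cases $P = O$ and $P = (0,0)$ are checked directly. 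One must also confirm that the non-square-at-$t_0$ hypothesis on $G_{\hat\phi}$ supplies condition (3) of Proposition 2.2 as claimed, perhaps by noting $a^2 - 4b$ itself (or its squarefree part) represents a nonidentity class in $G_{\hat\phi}$ unless it is already a square in $R[t]$, a degenerate case one can exclude or treat separately. Everything else is the same formal argument that underlies Theorem 2.10, and I would organize the write-up to parallel Stoll's recasting so that the two theorems share their structure.
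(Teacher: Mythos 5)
The paper does not prove this theorem; it is quoted from Gusi\'c--Tadi\'c (in Stoll's descent-flavored reformulation), so the only comparison available is with the cited source, and your plan reproduces its argument: verify conditions (2) and (3) of Proposition \ref{prop:grouptheory} with $n=2$, get (3) from the non-square hypothesis on $a^2-4b$ propagated to $t_0$ through the hypothesis on $G_{\hat\phi}$, and get (2) by factoring $[2]=\hat\phi\circ\phi$ and showing the two connecting homomorphisms have images in the finite groups $G_\phi$, $G_{\hat\phi}$ and commute with evaluation at $t_0$. One concrete slip: measured against the theorem's own labels ($G_\phi$ from the factors of $b$, $G_{\hat\phi}$ from those of $a^2-4b$), your first paragraph attaches the groups to the wrong sides. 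The identity $y^2=x(x^2+ax+b)$ in the UFD $R[t]$ shows that the class of the $x$-coordinate of a point of $E(k(t))$ is supported on the irreducible factors of $b$ and the units of $R$, so the image of $E(k(t))/\hat\phi(E'(k(t)))$ lies in $G_\phi$, while $E'(k(t))/\phi(E(k(t)))$ maps into $G_{\hat\phi}$ via the coefficient $a^2-4b$ of $E'\colon Y^2=X^3-2aX^2+(a^2-4b)X$; your second paragraph has it right, and since the hypothesis treats the two groups symmetrically the slip does not break the logic. The remaining gaps --- the four-term exact sequence linking the two isogeny descents to $E(K)/2E(K)$, the ambiguity of the preimage under $\hat\phi$ by $\ker\hat\phi=\langle(0,0)\rangle$ whose descent class is precisely $a^2-4b$, and the compatibility of the connecting maps with specialization --- are exactly the ones you flagged yourself; they are standard and are carried out in the cited paper, so your sketch is a sound plan but not yet a complete proof.
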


\begin{remark} \label{rem:discriminant} In Theorem \ref{thm:GT_2-descent}, note that the discriminant of $E$ is 
$$16((e_3-e_1)(e_3-e_2)(e_2-e_1))^2.$$
Hence the factors considered there are a subset of those of the discriminant of $E$.

Additionally, in Theorem \ref{thm:GT_2-isogeny}, $E$ has discriminant $-16b^2(a^2-4b)$ and the dual curve $E'$ has discriminant $2^{10}b(a^2-4b)^2$. Hence the factors considered there are a subset of those of either discriminant. In particular, recall that, for an elliptic curve with a 2-torsion point that is not $(0,0)$, moving the 2-torsion point to $(0,0)$ does not change the discriminant. Thus we can apply Theorem \ref{thm:GT_2-isogeny} to any elliptic curve with exactly one nontrivial 2-torsion point by considering factors of the discriminant. Further, since we have demonstrated that what matters is the factors of $b$ and $a^2-4b$, we may ignore the $16$ in front of the discriminant of $E$; that is, we can take the factors of $\Delta_E$/16 and still obtain suitable generators for $G_\phi$ and $G_{\hat\phi}$.

\end{remark}

\section{Rational 2-division Curves} \label{sec:2divcurves}

Notice that all methods discussed previously show that a specialization map is injective by ultimately showing that the induced map on $E(K)/2E(K)$ is injective. As Example \ref{ex:torsion_needed} shows, this alone does not imply that specialization is injective. Additionally, Example \ref{ex:linear_algebra} shows that specialization can be injective without this condition holding true. Provided that certain polynomials define a rational curve over $\mathbb Q$, we provide a way to get around this for certain specializations. In particular, if the 2-torsion curve is rational, this approach allows us to apply the criterion of Gusi\'c to certain specializations of elliptic curves without rational 2-torsion points. The following Proposition provides the setup to apply Gusi\'c's criterion in this new way.

\begin{proposition}\label{prop:torsion_division} Let $k$ be a number field. Let $E/k(t)$ be an elliptic curve given by the Weierstrass equation
$$y^2 = x^3+A(t)x+B(t)$$
such that the 2-torsion curve
$$C_2: a^3+A(t)a+B(t)=0$$
is irreducible and rational over $k$. Fix an isomorphism of function fields
\begin{align*}
k(C_2) &\cong k(\alpha) \\
t &\mapsto u(\alpha) \\
a &\mapsto v(\alpha).
\end{align*}
Then the elliptic curve 
$$E': y^2 = x^3+A(u(\alpha))x+B(u(\alpha))$$
defined over $k(\alpha)$ has the following properties.
\begin{enumerate}
	\item $E'(k(\alpha))$ has the nontrivial 2-torsion point $(v(\alpha),0)$.
	\item The function field isomorphism gives an embedding $E(k(t)) \subset E'(k(\alpha)).$
	\item Let $\alpha_0 \in k$ and set $t_0 = u(\alpha_0)$. Let $M < E(k(t))$ be a subgroup. If the specialization map $\sigma'_{\alpha_0}$ for $E'$ is injective on the image of $M$ via the embedding above, then the specialization map $\sigma_{t_0}|_M$ for $E$ is injective.
\end{enumerate}
\end{proposition}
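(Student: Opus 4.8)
The plan is to dispatch the three claims in turn; the first two are formal consequences of base change along a field extension, and the substance lies in the third. Throughout, write $\iota^\#\colon k(t)\hookrightarrow k(\alpha)$ for the field embedding obtained by composing the inclusion $k(t)\subset k(C_2)$ with the fixed isomorphism $k(C_2)\cong k(\alpha)$; thus $\iota^\#$ sends $t$ to $u(\alpha)$, and $E'$ is by construction the base change of $E$ along $\iota^\#$. As a preliminary I would record that $E'$ really is an elliptic curve over $k(\alpha)$: its discriminant $4A(u(\alpha))^3+27B(u(\alpha))^2$ is the image under $\iota^\#$ of the discriminant of $E$, hence nonzero since $\iota^\#$ is injective.

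For claim (1) I would simply transport the defining relation of $C_2$. Inside $k(C_2)$ one has $a^3+A(t)a+B(t)=0$; applying the isomorphism $k(C_2)\cong k(\alpha)$ gives $v(\alpha)^3+A(u(\alpha))v(\alpha)+B(u(\alpha))=0$ in $k(\alpha)$, so $(v(\alpha),0)$ is a point of $E'$. Being an affine point with vanishing $y$-coordinate it equals its own negative, hence has order dividing $2$; it is not the identity, so its order is exactly $2$. For claim (2), base change along $\iota^\#$ induces a map on groups of rational points, namely $\iota\colon (x(t),y(t))\mapsto(x(u(\alpha)),y(u(\alpha)))$. This is a group homomorphism because the chord-and-tangent addition formulas for $E$ and $E'$ are the same rational expressions, with coefficients matched by $\iota^\#$; and it is injective because if two points share an image then their coordinates agree after applying the injection $\iota^\#$, so they already agreed in $k(t)$.

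For claim (3), the first observation is that the two specializations target the same curve. Since $t_0=u(\alpha_0)$, the Weierstrass equation $y^2=x^3+A(u(\alpha_0))x+B(u(\alpha_0))$ defining $E'_{\alpha_0}$ is literally $y^2=x^3+A(t_0)x+B(t_0)$, which defines $E_{t_0}$; moreover $\alpha_0$ is a legitimate specialization point for $E'$ (its discriminant there is that of $E$ evaluated at $t_0$, hence nonzero and pole-free, and $u$ is regular at $\alpha_0$), so $\sigma'_{\alpha_0}$ is defined. It then remains to verify the identity of maps
\[ \sigma'_{\alpha_0}\circ\iota=\sigma_{t_0}\colon E(k(t))\longrightarrow E_{t_0}(k)=E'_{\alpha_0}(k), \]
equivalently the commutativity of the obvious square. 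For a point whose coordinates are regular at $t_0$, both composites give $(x(t_0),y(t_0))$; for a point at which $x(t)$ or $y(t)$ has a pole at $t_0$, one checks that $x(u(\alpha))$ (respectively $y(u(\alpha))$) then has a pole at $\alpha_0$ — here using that $u$ is non-constant and regular at $\alpha_0$ — so that both composites send the point to the identity $\mathcal{O}$. Granting commutativity, the proof of (3) is a one-line diagram chase: if $P\in M$ satisfies $\sigma_{t_0}(P)=\mathcal{O}$, then $\sigma'_{\alpha_0}(\iota(P))=\mathcal{O}$ with $\iota(P)$ in the image of $M$, so $\iota(P)=\mathcal{O}$ by hypothesis, whence $P=\mathcal{O}$ since $\iota$ is injective.

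I expect the only real obstacle to be the commutativity of that square — more precisely, reconciling the naive ``substitute $t_0$'' description of a specialization map with its correct definition (via the N\'eron model, or via projective coordinates) at the fibres where a coordinate function acquires a pole; this is exactly the point at which one argues that a pole of $x(t)$ at $t_0$ forces a pole of $x(u(\alpha))$ at $\alpha_0$. Everything else is formal. Finally, claim (3) is what makes the construction useful: applied to $E'$, which by claim (1) now carries a $k(\alpha)$-rational $2$-torsion point, the Gusi\'c--Tadi\'c criteria (Theorems \ref{thm:GT_2-descent} and \ref{thm:GT_2-isogeny}) become available, and whenever they certify injectivity of $\sigma'_{\alpha_0}$ on the image of $M$, claim (3) transfers that conclusion back to $\sigma_{t_0}|_M$.
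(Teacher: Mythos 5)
Your proposal is correct and follows essentially the same route as the paper: transport the defining relation of $C_2$ to get the $2$-torsion point, use the injectivity of the function-field embedding $t\mapsto u(\alpha)$ for the embedding of Mordell--Weil groups, and conclude claim (3) by a diagram chase around the commutative square whose bottom arrow is the identity on $E_{t_0}(k)=E'_{\alpha_0}(k)$. Your treatment is somewhat more careful than the paper's at the fibres where a coordinate has a pole and in checking that $\alpha_0$ is a valid specialization point, but these are refinements of the same argument rather than a different one.
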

\begin{proof}
\begin{enumerate}
\item Note that $x^3+A(t)x+B(t)$ vanishes at $x = a$. By applying the isomorphism of function fields above, we see that $x^3+A(u(\alpha))x+B(u(\alpha))$ vanishes at $x = v(\alpha)$. However, $v(\alpha) \in k(\alpha)$, so $E'$ has the $k(\alpha)$-rational 2-torsion point $(v(\alpha),0)$.
\item The map $t \mapsto u(\alpha)$ gives an injection of function fields $k(t) \to k(\alpha)$. Now if $Q = (x_Q(t),y_Q(t)) \in E(k(t)) \setminus \{O\}$, then
	$$Q' = (x_Q(u(\alpha)),y_Q(u(\alpha))) \in E'(k(\alpha)),$$
	since $E'$ was obtained from $E$ by the same substitution. It's now clear from the injection of function fields that if $Q_1' = Q_2'$ then $Q_1 = Q_2$. Indeed if $f(t) = x_{Q_1}(t) - x_{Q_2}(t)$ evaluates to 0 under the map $t \mapsto u(\alpha)$ then $f(t)$ is identically zero; we can argue similarly for the $y$-coordinates.

	\item From the formulas above and claim 2, we have a commutative diagram
	
	\begin{center}\begin{tikzcd}
E(k(t)) \arrow[r] \arrow[d, "\sigma_{t_0}"] & E'(k(\alpha)) \arrow[d, "\sigma_{\alpha_0}"] \\
E_{t_0}(k) \arrow[r, "\sim"] & E'_{\alpha_0}(k)
\end{tikzcd}\end{center}
where the top arrow is an injection and the bottom arrow is the identity map. Hence if $\sigma_{t_0}(Q) = O_{t_0}$, then $Q$ maps to $O'_{\alpha_0}$ going both ways on the diagram. But going right then down is an injection, so $Q = O$.

\end{enumerate}
\end{proof}

\begin{remark}
You can change variables on $E'$ (preserving Weierstrass form) and still preserve the statements above. For the commutative diagram in the proof of statement 3, instead of the bottom arrow being equality it becomes an isomorphism, and the top arrow is also changed by an isomorphism.
\end{remark}

\begin{remark}
One is only able to use Proposition \ref{prop:torsion_division} to check injectivity of specialization maps for $t_0 \in k$ with the property that $t_0 = u(\alpha_0)$ for some $\alpha_0 \in k$. In particular, these values of $t_0$ often form a Hilbert subset of $k$, and thus Proposition \ref{prop:torsion_division} cannot always be applied to all $t_0 \in k$.
\end{remark}

The benefit of Proposition \ref{prop:torsion_division} is that one always has a 2-torsion point in $E'(\mathbb Q(\alpha))$. Thus one can use Theorem \ref{thm:GT_2-isogeny} (or, in the unlikely case that the polynomial defining $C_2$ is a cyclic cubic over $k(t)$, Theorem \ref{thm:GT_2-descent}) on $E'$ to make statements about the injectivity of specialization maps for $E$ despite the fact that $E$ has no nontrivial $\mathbb Q(t)$-rational 2-torsion points. In particular, in contrast with Algorithm $\ref{algorithm}$, generators of $E(\mathbb Q(t))$ do not need to be known to do this. We illustrate this with the example 
$$E: y^2 = x^3-t^2x+t^2.$$
Set
$$C_2 : a^3-t^2a+t^2 = 0.$$
Using Sage, we obtain the isomorphism of function fields
\begin{align*}
\mathbb Q(C) &\cong \mathbb Q(\alpha) \\
t &\mapsto \frac{1}{\alpha-\alpha^3} \\
a &\mapsto \frac{1}{1-\alpha^2}.
\end{align*}
Via this isomorphism, we obtain the new elliptic curve
$$E': y^2 = x^3-\frac{1}{(\alpha-\alpha^3)^2}x+\frac{1}{(\alpha-\alpha^3)^2}.$$
Setting $x = (\alpha-\alpha^3)^{-2}X$ and $y = (\alpha-\alpha^3)^{-3}Y$, we obtain
$$E'': Y^2 = X^3-(\alpha-\alpha^3)^2X+(\alpha-\alpha^3)^4$$
with $2$-torsion point $(\alpha^4-\alpha^2,0)$. This elliptic curve has discriminant
$$-16\alpha^6(\alpha-1)^6(\alpha+1)^6(3\alpha^2-4)(3\alpha^2-1)^2.$$
Applying Theorem \ref{thm:GT_2-isogeny} to $E''$ now yields the following statement.

\begin{proposition} \label{prop:GT_example_notorsion}
Let $E$ be as above and $t_0$ be a rational number of the form \break $t_0 = 1/(\alpha_0-\alpha_0^3)$ for some rational number $\alpha_0$. Let $\Phi$ be the set of irreducible factors of
$$-\alpha^6(\alpha-1)^6(\alpha+1)^6(3\alpha^2-4)(3\alpha^2-1)^2$$
in $\mathbb Z[\alpha]$ along with $-1$ - more explicitly, set
$$\Phi = \{-1, \alpha, \alpha-1, \alpha+1, 3\alpha^2-4, 3\alpha^2-1\}.$$ Suppose that, for each product $h(\alpha)$ of some nonempty subset of the elements of $\Phi$, the rational number $h(\alpha_0)$ is not a square. Then the specialization map $\sigma_{t_0}$ is injective.
\end{proposition}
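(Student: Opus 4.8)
The plan is to deduce the statement by applying Theorem~\ref{thm:GT_2-isogeny} to the auxiliary curve $E''$ and then transferring the conclusion back to $E$ through Proposition~\ref{prop:torsion_division}. Concretely, I would apply Proposition~\ref{prop:torsion_division} with $M = E(\mathbb Q(t))$ for the function field isomorphism fixed above; by the change-of-variables remark following that proposition we may use the scaled model $E''$ in place of $E'$ without affecting the conclusion (the scaling $x = (\alpha-\alpha^3)^{-2}X$, $y = (\alpha-\alpha^3)^{-3}Y$ only alters the top and bottom maps in the commutative diagram of Proposition~\ref{prop:torsion_division}(3), each by an isomorphism). So it suffices to prove that $\sigma''_{\alpha_0}\colon E''(\mathbb Q(\alpha)) \to E''_{\alpha_0}(\mathbb Q)$ is injective, since Proposition~\ref{prop:torsion_division}(3) then yields injectivity of $\sigma_{t_0}$ on all of $E(\mathbb Q(t))$, where $t_0 = 1/(\alpha_0-\alpha_0^3)$.

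To invoke Theorem~\ref{thm:GT_2-isogeny} for $E''$ I first need $E''$ to have exactly one nontrivial $\mathbb Q(\alpha)$-rational $2$-torsion point, so that, by Remark~\ref{rem:discriminant}, moving that point to the origin leaves the discriminant unchanged and the irreducible factors of the discriminant furnish generators for $G_\phi$ and $G_{\hat\phi}$. Now $E''$ has the $\mathbb Q(\alpha)$-rational $2$-torsion point exhibited above (rational by Proposition~\ref{prop:torsion_division}(1)), so I would check that the remaining quadratic factor of $X^3-(\alpha-\alpha^3)^2X+(\alpha-\alpha^3)^4$ is irreducible over $\mathbb Q(\alpha)$. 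A short computation --- dividing out the known linear factor, or translating the $2$-torsion point to $(0,0)$ to obtain $y^2 = x^3+ax^2+bx$ --- shows the discriminant of that quadratic, namely $a^2-4b$, equals $(\alpha-\alpha^3)^2(4-3\alpha^2)$; since $4-3\alpha^2$ is a nonconstant squarefree polynomial this is not a square in $\mathbb Q(\alpha)$, which at once confirms that $E''$ is of $2$-isogeny type over $\mathbb Q(\alpha)$ and verifies the non-degeneracy hypothesis of Theorem~\ref{thm:GT_2-isogeny}. (The cubic defining $C_2$ has Galois group $S_3$ over $\mathbb Q(t)$, not $\mathbb Z/3$, so we land in the situation of Theorem~\ref{thm:GT_2-isogeny} rather than Theorem~\ref{thm:GT_2-descent}.)

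By Remark~\ref{rem:discriminant}, $G_\phi$ and $G_{\hat\phi}$ for $E''$ may then be taken to be generated by the irreducible factors of $\Delta_{E''}/16$ in $\mathbb Z[\alpha]$ together with the units of $\mathbb Z$; reading these off from the factorization of $\Delta_{E''}$ displayed above yields exactly $\Phi = \{-1,\alpha,\alpha-1,\alpha+1,3\alpha^2-4,3\alpha^2-1\}$. Since distinct irreducible polynomials, together with $-1$, are independent modulo squares in $\mathbb Q(\alpha)^*$, it follows that $G_\phi, G_{\hat\phi} \le \langle\Phi\rangle \le \mathbb Q(\alpha)^*/(\mathbb Q(\alpha)^*)^2$, and the nonidentity classes of $\langle\Phi\rangle$ are represented precisely by the products $h(\alpha)=\prod_{f\in S}f$ over nonempty $S\subseteq\Phi$. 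The hypothesis of the Proposition is exactly that every such $h(\alpha_0)$ is a non-square in $\mathbb Q$; in particular this holds for the nonidentity elements of $G_\phi$ and $G_{\hat\phi}$, which is what Theorem~\ref{thm:GT_2-isogeny} requires of $E''$ at $\alpha_0$. (Taking $S$ to be $\{\alpha\}$, $\{\alpha-1\}$, or $\{\alpha+1\}$ forces $\alpha_0\notin\{0,1,-1\}$, so $t_0$ and the specialization at $\alpha_0$ are defined, while $\alpha_0\in\mathbb Q$ automatically keeps $\alpha_0$ off the zeros of $3\alpha^2-1$ and $3\alpha^2-4$.) Theorem~\ref{thm:GT_2-isogeny} then gives injectivity of $\sigma''_{\alpha_0}$, and the first paragraph gives injectivity of $\sigma_{t_0}$.

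I do not expect a single deep step, as the argument is mostly an assembly of tools already in hand. The two points calling for genuine care are: first, confirming that $E''$ falls into the $2$-isogeny case over $\mathbb Q(\alpha)$, which is what makes both Remark~\ref{rem:discriminant} and Theorem~\ref{thm:GT_2-isogeny} applicable; and second, the bookkeeping that the finite (and slightly redundant) test over all nonempty sub-products of $\Phi$ really does subsume the hypothesis of Theorem~\ref{thm:GT_2-isogeny} --- that is, that the representatives one must test for the nonidentity classes of $G_\phi$ and $G_{\hat\phi}$ all occur among the $h(\alpha)$ appearing in the statement.
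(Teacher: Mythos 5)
Your proposal is correct and follows exactly the route the paper takes: its proof of this proposition is just the citation ``Theorem~\ref{thm:GT_2-isogeny}, Remark~\ref{rem:discriminant}, Proposition~\ref{prop:torsion_division},'' and you have simply filled in the details of that assembly (including the verification that $a^2-4b = (\alpha-\alpha^3)^2(4-3\alpha^2)$ is a non-square in $\mathbb Q(\alpha)$, which the paper leaves implicit).
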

\begin{proof}
Theorem \ref{thm:GT_2-isogeny}, Remark \ref{rem:discriminant}, Proposition \ref{prop:torsion_division}.
\end{proof}
For example, this can be used to show that specialization at $t_0 = 8/15$ (corresponding to $\alpha_0 = -3/2$) is injective. Indeed, the relevant factors here are
$$3/2 = -\alpha_0, 5/2 = -(\alpha_0-1), 1/2 = -(\alpha_0+1), 23/4 = 3\alpha_0^2-1, 11/4 = 3\alpha_0^2-4$$
and no product of these is a square. On the other hand, we cannot use this to decide whether or not specialization at $t_0 = 1/6$ (corresponding to $\alpha_0 = -2$) is injective because one of the factors is $1 = -(\alpha_0+1)$.

Next, we generalize Proposition \ref{prop:torsion_division} to give a method of introducing a $k(\alpha)$-rational 2-division point of any $P \in E(k(t)) \setminus E[2](k(t))$ which has no $k(t)$-rational 2-division points. This is a generalization in the sense that the 2-torsion points are precisely the 2-division points of $O$. We will need the ``2-division polynomial of $P$" for this; that is, the polynomial $d_{2,P}$ with the property that $d_{2,P}$ has a root in $k(t)$ if and only if $P$ is divisible by 2 in $E(k(t))$. For details on where this polynomial comes from, see \cite{Billingsley-algorithm}.

\begin{proposition} \label{prop:rational_division} Let $k$ be a number field. Let $E/k(t)$ be an elliptic curve given by the Weierstrass equation
$$y^2 = x^3+A(t)x+B(t),$$
and fix $P = (x_P(t), y_P(t)) \in E(k(t))\setminus E[2](k(t))$ such that $P$ is not divisible by 2 in $E(K)$. Let $\phi(t,x)$ be an irreducible factor of $d_{2,P}(t,x)$ such that
$$C_P: \phi(t,a) =0$$
is rational over $k$. Fix an isomorphism of function fields
\begin{align*}
k(C_P) &\cong k(\alpha) \\
t &\mapsto u(\alpha) \\
a &\mapsto v(\alpha).
\end{align*}
Then the elliptic curve 
$$E': y^2 = x^3+A(u(\alpha))x+B(u(\alpha))$$
defined over $k(\alpha)$ has the following properties.
\begin{enumerate}
	\item $P' = (x_P(u(\alpha)),y_P(u(\alpha))) \in E'(k(\alpha))$ is divisible by 2 in $E'(k(\alpha))$.
	\item The function field isomorphism gives an embedding $E(k(t)) \subset E'(k(\alpha)).$
	\item Let $\alpha_0 \in k$ and set $t_0 = u(\alpha_0)$. Let $M < E(k(t))$ be a subgroup. If the specialization map $\sigma'_{\alpha_0}$ for $E'$ is injective on the image of $M$ via the embedding above, then the specialization map $\sigma_{t_0}|_M$ for $E$ is injective.
\end{enumerate}
\end{proposition}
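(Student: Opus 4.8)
The plan is to transcribe the proof of Proposition~\ref{prop:torsion_division}, to which this statement is parallel: there the curve $C_2$ was cut out by the vanishing of the cubic $x^3+A(t)x+B(t)$, whose roots are the $x$-coordinates of the $2$-torsion points (equivalently, the $2$-division points of $O$), while here $C_P$ is cut out by the vanishing of an irreducible factor $\phi$ of the $2$-division polynomial $d_{2,P}$ of $P$. Only part~(1) needs a genuinely new argument; parts~(2) and~(3) follow from the function-field embedding exactly as before.

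First I would dispose of part~(2). The assignment $t\mapsto u(\alpha)$ is a field homomorphism $k(t)\hookrightarrow k(\alpha)$ (automatically injective), and $E'$ is by construction the base change of $E$ along it; hence $Q=(x_Q(t),y_Q(t))\mapsto Q'=(x_Q(u(\alpha)),y_Q(u(\alpha)))$ is a well-defined group homomorphism $E(k(t))\to E'(k(\alpha))$. It is injective because $Q_1'=Q_2'$ forces the differences $x_{Q_1}-x_{Q_2}$ and $y_{Q_1}-y_{Q_2}$ of coordinate functions into the kernel of $k(t)\hookrightarrow k(\alpha)$, hence makes them vanish. This is verbatim part~(2) of Proposition~\ref{prop:torsion_division}.

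The heart of the matter is part~(1). On the curve $C_P\colon \phi(t,a)=0$ the coordinate functions satisfy $\phi(t,a)=0$ in $k(C_P)$, so $a\in k(C_P)$ is a root of $\phi(t,x)$, and therefore of $d_{2,P}(t,x)$ since $\phi\mid d_{2,P}$. Transporting this along the fixed isomorphism $k(C_P)\cong k(\alpha)$, which identifies $k(t)$ with its image under $t\mapsto u(\alpha)$, the element $v(\alpha)$ is a root of $d_{2,P}$ over $k(\alpha)$. By the defining property of $d_{2,P}$ from \cite{Billingsley-algorithm} recalled before the statement, a root over a field gives a $2$-division point over that field; applying this over $k(\alpha)$ and identifying the base change of the pair $(E,P)$ along $t\mapsto u(\alpha)$ with $(E',P')$ yields that $P'$ is divisible by $2$ in $E'(k(\alpha))$. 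If one wants the half explicitly: its $x$-coordinate is $v(\alpha)$, and its $y$-coordinate is a fixed rational function of $v(\alpha)$ via the halving formula because $y_P\neq 0$ (as $P\notin E[2]$), so the half is genuinely $k(\alpha)$-rational rather than merely having rational $x$-coordinate.

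Finally, part~(3) is formal and identical to part~(3) of Proposition~\ref{prop:torsion_division}: I would form the commutative square whose top row is the embedding from~(2), whose vertical maps are the specialization homomorphisms $\sigma_{t_0}$ and $\sigma'_{\alpha_0}$, and whose bottom row is the identity of $E_{t_0}=E'_{\alpha_0}$ --- literally the same Weierstrass equation since $t_0=u(\alpha_0)$ --- then restrict to $M$ and its image and chase a kernel element of $\sigma_{t_0}|_M$ around the square. I do not anticipate a serious obstacle, since this is a clean generalization of Proposition~\ref{prop:torsion_division}; the one place demanding care is part~(1), namely invoking the $2$-division-polynomial criterion over the extension $k(\alpha)$ (it was only stated over $k(t)$) and keeping straight that base change of $(E,P)$ really produces $(E',P')$. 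The remark following Proposition~\ref{prop:torsion_division} about Weierstrass changes of variable on $E'$ applies here unchanged and will be needed when we subsequently feed $E'$ into Theorem~\ref{thm:GT_2-isogeny}.
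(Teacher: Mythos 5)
Your proposal is correct and follows essentially the same route as the paper: part (1) by observing that $v(\alpha)$ is a root of the (base-changed) $2$-division polynomial of $P'$ and that $y_P(u(\alpha))\neq 0$ keeps $P'$ out of the $2$-torsion, and parts (2) and (3) by transcribing the function-field injection and commutative-square arguments from Proposition \ref{prop:torsion_division}. The extra care you take in (1) --- invoking the $d_{2,P}$ criterion over $k(\alpha)$ and noting the half is genuinely $k(\alpha)$-rational --- is only a slight elaboration of what the paper leaves implicit.
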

\begin{proof}
	\begin{enumerate}
	\item Since $\phi(t,x) \in k(t)$ vanishes at $x = a$, we have that $\phi(u(\alpha),x)$ vanishes at $x = v(\alpha)$. Notice that $y_P(u(\alpha)) \neq 0$ since $y_P(t) \neq 0.$ Hence $P'$ is not 2-torsion, and its 2-division polynomial has the root $x = v(\alpha)$. Hence $P'$ is divisible by 2 in $k(\alpha)$.
	\end{enumerate}
Mutatis mutandis, the proof for statements 2 and 3 follows from that of Proposition \ref{prop:torsion_division}.
\end{proof}

\begin{remark} If the 2-torsion curve is rational, one will be able to find many injective specialization maps using Proposition \ref{prop:torsion_division}. If any of these maps happens to be surjective or even have a finite cokernel, one can use generators of the specialization to find generators of $E(k(t))$, thus making it easy to check injectivity for any other $t_0$. The utility of Proposition \ref{prop:rational_division} lies in that the 2-torsion curve may not be rational, but some other curve $C_P$ might be.

\end{remark}

To illustrate how this can be used, let's return to the example 
$$E: y^2 = x^3-t^2x+t^2.$$
We reproduce the result of the calculation in Example \ref{ex:linear_algebra}; that is, that the specialization map for $t_0 = 27$ is injective, but this time without using generators of $E_{27}(\mathbb Q)$. In this example, we cannot use any methods from earlier sections of this article due to a lack of 2-torsion. Additionally, because $P = (t,t)$ specializes to $(27,27) = [2](-9,81)$ (and thus $\closure[1]{\sigma_{27}}$ is not injective), we can use neither Algorithm \ref{algorithm} nor the method just discussed above which combines Theorem \ref{thm:GT_2-isogeny} with Proposition \ref{prop:torsion_division}. We use Proposition \ref{prop:rational_division} as follows. We first find an elliptic curve $E'$ and a point $R' \in E(\mathbb Q(\alpha))$ such that $2R' = P'$. Then, by replacing $P'$ by $R'$, we examine a subgroup $M$ of $E'(\mathbb Q(\alpha))$ which contains a copy of $E(\mathbb Q(t))$. We then hope that the inclusion $M \to E'(\mathbb Q(\alpha))$ satisfies the conditions of Proposition \ref{prop:grouptheory}. If this is true, we may attempt to show that specialization is injective on $M$ using Algorithm \ref{algorithm}, thereby showing that $\sigma_{27}$ is injective. To begin, we have the curve
$$C_P : f_P(t,a) = a^4+2t^2a^2-8t^2a+t^4-t(4a^3-4t^2a+4t^2) = 0$$
defined by the 2-division polynomial of $P$. View $E$ as an elliptic curve over the function field $\mathbb Q(C_P)$ of $C_P$ (which contains $\mathbb Q(t)$). Now, after a bit of searching with Sage, $E(\mathbb Q(C_P))$ has the point 
$$R = \left(a,\frac{a^3-3a^2t+at^2+t^3-2t^2}{2t}\right)$$
such that $[2]R = P$. Sage gives the isomorphism of function fields
\begin{align*}
\mathbb Q(C_P) &\cong \mathbb Q(\alpha) \\
t &\mapsto \frac{4\alpha^3(\alpha + 2)}{(\alpha^2+2\alpha-1)^2} = u(\alpha)\\
a &\mapsto \frac{4\alpha^2(\alpha + 2)}{(\alpha^2+2\alpha-1)^2} = v(\alpha) 
\end{align*}
from which we obtain the new elliptic curve
$$E' : y^2 = x^3 - \left( \frac{4\alpha^3(\alpha + 2)}{(\alpha^2+2\alpha-1)^2}\right)^2x + \left( \frac{4\alpha^3(\alpha + 2)}{(\alpha^2+2\alpha-1)^2}\right)^2.$$

Set $Q = (0,t)$ (and recall that $E(\mathbb Q(t))$ is generated by $P$ and $Q$; see the discussion at the start of \S 4.3.1.). Through this change of variables, we have
\begin{align*}
P' &= \left( \frac{4\alpha^3(\alpha + 2)}{(\alpha^2+2\alpha-1)^2}, \frac{4\alpha^3(\alpha + 2)}{(\alpha^2+2\alpha-1)^2} \right), \\
Q' &= \left( 0,  \frac{4\alpha^3(\alpha + 2)}{(\alpha^2+2\alpha-1)^2} \right), \\
R' &= \left( \frac{4\alpha^2(\alpha + 2)}{(\alpha^2+2\alpha-1)^2}, \frac{4\alpha^3(\alpha+2)(\alpha^2-3)}{(\alpha^2+2\alpha-1)^3} \right).
\end{align*}
In particular, in $E'(\mathbb Q(\alpha))$ we have $2R' = P'$, and $E'$ is an elliptic curve defined over a rational function field over $\mathbb Q$.

We can now use Algorithm \ref{algorithm} on the subgroup $M$ of $E'(\mathbb Q(\alpha))$ generated by $R'$ and $Q'$. We omit the details (which are easily verified using Sage) of showing that the inclusion $M \to E'(\mathbb Q(\alpha))$ satisfies the conditions of Proposition \ref{prop:grouptheory}. Since $2R' = P'$, $M$ contains a copy of $E(\mathbb Q(t))$ by statement 3 of Proposition \ref{prop:rational_division}. We set $\alpha_0 = -3$, because $u(-3) = 27.$ Now, using Sage, we see that the points $R'_{\alpha_0}, Q'_{\alpha_0},$ and $(R'+Q')_{\alpha_0}$ are not divisible by 2 in $E'_{\alpha_0}(\mathbb Q)$. Additionally, the curve $E'_{\alpha_0}$ has no $\mathbb Q$-rational 2-torsion. Hence we conclude that the specialization map $\sigma'_{\alpha_0}$ is injective, so that the specialization map $\sigma_{27}$ is injective.

\begin{remark} The specialization at $t_0 = 7$ can be shown to be injective using the same method as Example \ref{ex:linear_algebra}. Despite this, the method we just outlined (using specialization at $\alpha_0 = -1$) still fails to show that the map is injective. Indeed, in $E'_{-1}(\mathbb Q)$ we have 
$$[2](1,1) = (R'+Q')_{-1},$$
so that $\closure[1]{\sigma'_{-1}}$ fails to be injective on $M$.
\end{remark}

\subsection*{Acknowledgements}

I would like to thank Edray Goins and Donu Arapura for their insightful discussions regarding the contents of this paper. I also would like to thank Kenji Matsuki for contributing significant revisions and clarifications for an early draft, and the referee for numerous corrections and suggestions that streamline the presentation of the results.


\normalsize

\bibliography{refs}
\bibliographystyle{amsalpha}





\normalsize
\baselineskip=17pt








\end{document}